\documentclass[]{interact}
\usepackage{latexsym}
\usepackage{anyfontsize} 
\usepackage{tikz} 
\usetikzlibrary{cd} 
\usepackage{color}
\usepackage{hyperref}
\usepackage{url}
\usepackage{breakurl}
\newcommand{\bburl}[1]{\textcolor{blue}{\url{#1}}}

\makeatletter
\newcommand{\monthyear}[1]{%
  \def\@monthyear{\uppercase{#1}}}
\newcommand{\volnumber}[1]{%
  \def\@volnumber{\uppercase{#1}}}
\makeatother

\theoremstyle{plain}
\numberwithin{equation}{section} 
\newtheorem{thm}{Theorem}[section] 
\newtheorem{theorem}[thm]{Theorem}
\newtheorem{lemma}[thm]{Lemma}
\newtheorem{example}[thm]{Example}
\newtheorem{definition}[thm]{Definition}
\newtheorem{proposition}[thm]{Proposition}
\newtheorem{corollary}[thm]{Corollary}

\numberwithin{table}{section} 
\numberwithin{figure}{section}

\def\NN{\mathbb{N}}
\def\PP{\mathbb{P}}
\def\ZZ{\mathbb{Z}}
\def\FF{\mathbb{F}}

\def\gooddoldprimes{\mathbb{G}_{\rm{D}}}
\def\baddoldprimes{\mathbb{B}_{\rm{D}}}
\def\goodrealizableprimes{\mathbb{G}_{\rm{R}}}
\def\badrealizableprimes{\mathbb{B}_{\rm{R}}}
\newcommand\ord{{\rm{ord}}}
\newcommand\failure{{\rm{Fail}}}
\newcommand\fix{{\rm{Fix}}}
\newcommand\least{{\mathcal{L}}}

\newcommand\trace{{\rm{trace}}}
\newcommand\notdivides{\mathrel{\kern-3pt\not\!\kern4.3pt\bigm|}}
\newcommand\divides\mid
\newcommand\smalldivides{\mathrel{\kern-2pt\kern3.5pt|}}
\newcommand\smallnotdivides{\mathrel{\kern-2pt\not\!\kern3.5pt|}}
\def\ge{\geqslant}
\def\le{\leqslant}

\begin{document}

\monthyear{Month Year}
\volnumber{Volume, Number}
\setcounter{page}{1}

\title{The local Dold congruence for Fibonacci and Lucas sequences}

\author{
\name{Sompong Chuysurichay,\textsuperscript{a}
Sawian Jaidee,\textsuperscript{b}
Chatchawan Panraksa,\textsuperscript{c}
and
Thomas Ward\textsuperscript{d}}
\affil{\textsuperscript{a}
Prince of Songkla University,
15 Karnjanavanich Road, Hat Yai,
Songkhla, 90110, Thailand
\textsuperscript{b}
Department of Mathematics,
Khon Kaen University,
Khon Kaen,
40002,
Muang District,
Thailand
\textsuperscript{c}
Mahidol University International College,
999 Phutthamonthon 4 Rd.,
Salaya, Phutthamonthon,
Nakhon Pathom,
73170,
Thailand
\textsuperscript{d}
Department of Mathematical Sciences,
Durham University,
Durham,
DH1 3LE,
County Durham,
United Kingdom}
}

\maketitle

{\bf Article type}: research 
\bigskip

\begin{abstract}
We discuss local properties of the classical
Fibonacci and Lucas sequences, characterising
in both cases those primes~$p$ for which the~$p$-part
satisfies the Dold congruences or is realizable. We calculate
an exact `repair' multiplier in each case and determine the
relative prime densities of the Dold, realizable, almost Dold,
and almost realizable prime sets. In that order, the four densities
are~$(\frac34,\frac13,1,\frac13)$ for the Lucas sequence
and~$(\frac12,\frac12,1,1)$ for the Fibonacci sequence.
\end{abstract}

\begin{keywords}
Arithmetic properties of recurrence sequences; realizability;
local Dold congruence; Lucas sequence; Fibonacci sequence
\end{keywords}

\section{Introduction and Definitions}

Recently Jaidee {\it{et al.}}~\cite{MR5076929} studied local
properties of integer sequences from the point of view
of realizability, a concept from a combinatorial
viewpoint on dynamical systems.
Our first interest here is to study
arithmetic aspects of the questions arising
in this context with an emphasis on linear recurrence sequences and
to explain some of the numerical observations made there.

We write~$\PP$ for the set of rational primes,~$\NN=\{1,2,\dots\}$
for the natural numbers,~$\ZZ$ for the integers,
and~$\NN_0$ for the non-negative integers.
For~$p\in\PP$ the~$p$-part of an integer~$n$
is written~$[n]_p=|n|_p^{-1}$. For an integer sequence~$a=(a_n)_{n\in\NN}$
we write
\[
[a]_p=([a_n]_p)_{n\in\NN}
\]
for the
sequence of~$p$-parts, and refer to a property satisfied by~$[a]_p$
as being true of the sequence~$a$ \emph{locally at~$p$}.
An integer sequence~$a$ satisfies the \emph{Dold congruence}
(or is Dold) if
\begin{equation*}\label{equationDoldForIntegerSequence}
\least_n(a)
=
\sum_{d\smalldivides n}\mu\bigl(\frac{n}{d}\bigr)a_d
\equiv
0\pmod{n}
\end{equation*}
for all~$n\in\NN$
and is said to be \emph{realizable} if
in addition~$\least_n(a)\ge0$ for all~$n\in\NN$.
Realizable means that the
sequence counts periodic points for some map,
in which case
the map is said to `realize' the sequence.
Notice that realizability in particular implies that the
original
sequence itself consists of non-negative integers.
The Dold congruence has a different formulation
due to Arias de Reyna~\cite{zbMATH02203410} as follows.
An integer sequence~$a$ is Dold if and only
if and only if
\begin{equation}\label{equationDoldForIntegerSequence2}
a_{mq^s}
\equiv
a_{mq^{s-1}}
\pmod{q^s}
\end{equation}
for all~$q\in\PP$,~$m\in\NN\setminus q\mathbb{N}$, and~$s\ge1$.
We refer to the survey by Byszewski {\it{et al.}}~\cite{MR4332826}
for an explanation of this terminology
and to Minton~\cite{MR3195758} for a classification of
these properties for linear recurrence sequences.

An integer sequence may fail to be realizable for
many different reasons, but Miska and Ward~\cite{MR4361581}
explored
a particularly simple notion of \emph{almost realizability},
where the failure can be repaired easily,
which in our context also applies to the Dold property.

\begin{definition}[Almost Dold sequences]\label{definitionAlmostDold}
For an integer sequence~$a$,
write~$\failure(a)$
for the least common multiple of
the denominators of the sequence~$(\frac{1}{n}\least_n(a))_{n\in\NN}$
if this is finite, in which case~$a$
is called \emph{almost Dold}.
An almost Dold sequence with~$\least_n(a)\ge0$
for all~$n\in\NN$ is called \emph{almost realizable}.
\end{definition}

This property leads to the notion of `repairing' a sequence
in the following sense. If~$\failure(a)$ exists (that is, is finite)
then multiplying~$a$ by~$\failure(a)$ produces a Dold sequence
and~$\failure(a)$ is the smallest integer with this property.
We will see later that the Lucas sequence gives rise
to an infinite family of non-trivial almost Dold sequences
to sit alongside the Stirling numbers identified in~\cite{MR4361581}.

\begin{definition}[Good primes]
Given an integer sequence~$a$ define two
associated sets of
primes as follows:
\begin{align*}
\gooddoldprimes(a)
&=
\{p\in\PP :
a\mbox{ is Dold at }p\},\\
\goodrealizableprimes(a)
&=
\{p\in\PP :
a\mbox{ is realizable at }p\}
\end{align*}
and similarly define~$\gooddoldprimes^{\text{almost}}$ and~$\goodrealizableprimes^{\text{almost}}$
for the analogous sets using the almost Dold and almost realizable
properties.
\end{definition}

We will also write~$\baddoldprimes(a)=\PP\setminus\gooddoldprimes(a)$
and~$\badrealizableprimes(a)=\PP\setminus\goodrealizableprimes(a)$
for the bad primes.
In the paper~\cite{MR5076929} these sets were discussed
for the Bernoulli denominators, the Bernoulli
numerators, and the Euler numbers in the particular
context of understanding realizability by a group
automorphism rather than an arbitrary map.
In those settings~$\gooddoldprimes(a)$ can be
viewed as a generalization of the notion of
regular prime (for the Bernoulli denominators)
for other sequences.

Clearly there are degenerate sequences for which all
primes are \emph{good},~$(1,1,1,\dots)$ being an example,
but the same phenomenon can arise in non-trivial ways.

\begin{example}[All primes can be good]
In~{\rm{\cite[Th.~4]{MR5076929}}} it is shown that an automorphism of
a nipotent group is locally nilpotently realizable at every
prime. That is, if~$\theta\colon G\to G$ is an automorphism
of a nilpotent group with the property that the
sequence~$a=(\fix_n(\theta))_{n\in\NN}$ defined by
\[
\fix_n(\theta)
=
\vert\{g\in G\colon\theta^ng=g\}\vert
\]
is finite for all~$n\in\NN$, then for any~$p\in\PP$
there is an automorphism~$\theta_p$ of a locally
nilpotent group~$G_p$ for which
\[
\fix_n(\theta_p)=[\fix_n(\theta)]_p
\]
for all~$n\in\NN$. That is,~$\goodrealizableprimes(a)=\PP$.
\end{example}

It is equally clear that there are degenerate sequences
for which all primes are \emph{bad}.
If~$a=(n)_{n\in\NN}$, then for any~$p\in\PP$ we have~$a_p=p$
and so~$a_p-a_1$ is not congruent to~$0$ modulo~$p$.
It follows that~$\baddoldprimes(a)=\PP$.

Our purpose here is to discuss these sets of
primes and their `almost' analogues for certain linear recurrence sequences.
We deliberately isolate some of the arguments for the Dold
property on its own, as this is the natural property for
arithmetic. Realizability is natural for dynamical systems,
but involves sign questions that are not natural in
arithmetic.
The picture that emerges is far from complete---indeed we
have nothing to say beyond a certain family of
binary recurrences---but is we hope of some interest.

\section{Lucas and Fibonacci numbers}

We will consider the Lucas sequence~$L=(1,3,4,\dots)$
and its companion the Fibonacci sequence~$F=(1,1,2,\dots)$
throughout this section. Define the two ranks of apparition
of a prime~$p$ by
\begin{align*}
\lambda(p)
&=
\begin{cases}
\min\{n\in\NN : p\divides L_n\}&\mbox{if $p$ divides some term of $L$},\\
\infty&\mbox{if not}
\end{cases}
\intertext{and}
\varphi(p)
&=
\begin{cases}
\min\{n\in\NN : p\divides F_n\}&\mbox{if $p$ divides some term of $F$},\\
\infty&\mbox{if not.}
\end{cases}
\end{align*}
It is well-known that
\begin{equation*}\label{rank1ForFibonacci}
\varphi(p)
\divides
p-\Bigl(\frac{5}{p}\Bigr)
\end{equation*}
for any odd prime~$p\neq5$, where
we write~$\bigl(\frac{5}{p}\bigr)$ for the Legendre symbol.

Clearly if~$p$ is a prime that does not divide
any term of~$L$ then~$p\in\goodrealizableprimes(L)$, so
we first examine those. Throughout we will make use of
well-known congruences and rules for rank of apparition
for the Lucas and Fibonacci sequences as they may be found, for example,
in Ribenboim's survey~\cite{MR1352481}.

\begin{lemma}\label{lemmaTrivialLucas}
Let~$p$ be a prime. Then the values of~$\lambda$ are
as follows.
\begin{enumerate}
\item[\rm(1)] $\lambda(2)=3$.
\item[\rm(2)] $\lambda(5)=\infty$.
\item[\rm(3)] If~$p\ne5$, then~$\lambda(p)<\infty$ if and only
if~$\varphi(p)$ is even. More precisely,
\begin{equation*}\label{eq:twolambda}
 \varphi(p)=2\lambda(p).
 \end{equation*}
\item[\rm(4)] If~$p\ne5$ and~$\left(\frac5p\right)=-1$, then
we have
\begin{equation}\label{oddrank}
\varphi(p)\text{ is odd}\quad\Longleftrightarrow\quad p\equiv1\pmod4.
\end{equation}
Consequently, if~$p\neq5$ and~$\left(\frac5p\right)=-1$ then
\[
\lambda(p)=\infty
\Longleftrightarrow
p\equiv13,17\pmod{20}
\]
and
\[
\lambda(p)
<
\infty
\Longrightarrow
p\equiv3,7\pmod{20}.
\]
\end{enumerate}
\end{lemma}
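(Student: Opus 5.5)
The plan is to work throughout with the standard identities $F_{2n}=F_nL_n$, $\gcd(F_n,L_n)\mid 2$, $L_n^2-5F_n^2=4(-1)^n$, and the divisibility property that $p\mid F_n$ if and only if $\varphi(p)\mid n$ (all in Ribenboim~\cite{MR1352481}).

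Parts (1) and (2) are direct. For (1), the Lucas sequence $L=(1,3,4,7,11,18,\dots)$ reduced mod~$2$ is $(1,1,0,\dots)$ with period~$3$, so $\lambda(2)=3$. For (2), the sequence mod~$5$ is $1,3,4,2,1,3,\dots$, purely periodic with period~$4$ and never zero, so $\lambda(5)=\infty$.

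For (3), the case $p=2$ is immediate: $\varphi(2)=3$ is odd, but $\lambda(2)=3$ is finite. So the clean equivalence $\lambda(p)<\infty\iff\varphi(p)$ even, and the formula $\varphi(p)=2\lambda(p)$, should be proved for odd $p\ne5$; I would note this restriction explicitly.
\begin{itemize}
\item Suppose first that $p\mid L_m$. Then $p\mid F_{2m}=F_mL_m$. Moreover $p\nmid F_m$, since otherwise $p$ divides both $L_m$ and $F_m$, hence divides $4(-1)^m$, which is impossible for odd~$p$. Therefore $\varphi(p)\mid 2m$ but $\varphi(p)\nmid m$, which forces $\varphi(p)$ to be even.
\item Conversely, suppose $\varphi(p)=2k$. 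Then $p\mid F_kL_k$ and $p\nmid F_k$, so $p\mid L_k$, giving $\lambda(p)\le k$.
\item Now apply the first bullet to $m=\lambda(p)$: it gives $2k\mid 2\lambda(p)$, so $k\le\lambda(p)$. Together with the second bullet, $\varphi(p)=2\lambda(p)$.
\end{itemize}

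For (4), take $p$ odd with $\bigl(\frac5p\bigr)=-1$, so $\varphi(p)\mid p+1$. The key tool is the index-$p$ identity $F_{p+1}F_{p-1}-F_p^2=(-1)^p=-1$ (Cassini), combined with $F_p\equiv\bigl(\frac5p\bigr)=-1$ and $F_{p+1}\equiv0 \pmod p$.

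I would then argue through the parity of $\varphi(p)$ using Cassini at index $\varphi(p)$:
\[
F_{\varphi-1}F_{\varphi+1}-F_\varphi^2=(-1)^{\varphi}.
\]
Since $F_{\varphi+1}\equiv F_{\varphi-1} \pmod p$ (because $F_\varphi\equiv0$), this gives
\[
F_{\varphi+1}^2\equiv(-1)^{\varphi}\pmod p.
\]
Next, $F_{\varphi+1}$ is the multiplier $c$ in $F_{n+\varphi}\equiv cF_n \pmod p$, so the period is $\pi(p)=\varphi(p)\cdot\ord_p(c)$. It is known that $\pi(p)\mid 2(p+1)$ but $\pi(p)\nmid p+1$ in this inert case, because $\alpha^{p+1}=\alpha\beta=-1$ in $\FF_{p^2}$. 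Hence $\alpha^{p+1}=-1$, and so $\ord(\alpha)\mid 2(p+1)$ with $v_2(\ord\alpha)=v_2(2(p+1))$.

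I expect this $2$-adic bookkeeping to be the main obstacle, and would handle it as follows. The condition $p\mid F_n$ is equivalent to $\alpha^{2n}=(\alpha\beta)^n=(-1)^n$. Writing the order of $\alpha^2$ as $(p+1)$ divided by something odd (its $2$-adic valuation equals $v_2(p+1)$), one checks two cases:
\begin{itemize}
\item If $p\equiv1\pmod4$, then $v_2(p+1)=1$. Here $\alpha^{2n}=(-1)^n$ with $n$ even would require the order of $\alpha^2$ to divide $n$, whereas $n$ odd with $\alpha^{2n}=-1$ is achievable. So $\varphi(p)$ is odd.
\item If $p\equiv3\pmod4$, then $v_2(p+1)\ge2$. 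An odd $n$ would need $\alpha^{4n}=1$ with $\alpha^{2n}\ne1$, forcing $v_2$ of the order of $\alpha^2$ to equal $1$, a contradiction. So $\varphi(p)$ is even.
\end{itemize}
This establishes \eqref{oddrank}.

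The final displayed consequences then follow from (3) and quadratic reciprocity. For $p\ne2,5$ we have $\bigl(\frac5p\bigr)=-1\iff p\equiv2,3\pmod5$. Combining this with the residue of $p$ mod~$4$ via CRT, $\varphi(p)$ is odd (so $\lambda(p)=\infty$) exactly when $p\equiv13,17\pmod{20}$, and $\lambda(p)<\infty$ forces $p\equiv3,7\pmod{20}$. Finally, $p=2$ is excluded from these statements because it falls outside the odd-prime setting used above, and is already covered by (1).
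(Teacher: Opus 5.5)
Your proposal is correct and follows essentially the paper's route. Part (3) goes through $F_{2n}=F_nL_n$ and $p\mid F_n\iff\varphi(p)\mid n$; your use of $L_n^2-5F_n^2=4(-1)^n$, and your exclusion of $p=2$ (where $\varphi(2)=3$ is odd yet $\lambda(2)=3$), correctly supply the restriction $p\neq2,5$ that the paper's proof uses but its statement omits. Part (4) goes through $\alpha^{p+1}=\alpha\beta=-1$ in $\FF_{p^2}$, with your $2$-adic analysis of $N=\ord(\alpha^2)$ doing the job of the paper's one-line evaluation $x^{(p+1)/2}=(-1)^{(p+3)/2}$ for $x=\alpha/\beta=-\alpha^2$. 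The Cassini and Pisano-period material at the start of (4) is never used and can be dropped. In the case $p\equiv1\pmod4$ you should name the witness explicitly: $n=N/2$ is odd with $\alpha^{2n}=-1$, and it is smaller than every even solution, since those are multiples of $N$.
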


\begin{proof}
Firstly notice that~$2\divides L_3$,
proving~(1).
Now~$L_n=\frac{F_{2n}}{F_{n}}$
and~$[F_{2n}]_5=[F_n]_{5}$ so~$5$
does not divide any term of~$L$,
giving~(2).
If~$p\neq2,5$ then~$p\divides L_n$ if and only
if~$p\divides F_{2n}$ and~$p\notdivides F_{n}$.
It follows that~$p$ divides some term of~$L$
if and only if~$\varphi(p)$ is even.
Since~$p\divides F_m$ if and only if~$\varphi(p)\divides m$,
this becomes $\varphi(p)\divides 2n$ and $\varphi (p)\notdivides n$.
Such an index~$n$ exists precisely when~$\varphi(p)$ is even,
and the least such~$n$ is~$\frac{\varphi(p)}{2}$,
which shows~(3).
For~(4), assume that~$p\ne2,5$ and~$\left(\frac5p\right)=-1$
so that~$p\equiv\pm 2$ modulo~$5$
and~$5$ is a quadratic non-residue modulo~$p$ and
hence~$\FF_p[\sqrt{5}]=\FF_{p^2}$.
Let~$\alpha=\frac{(1+\sqrt5}{2}$,~$\beta=\frac{(1-\sqrt5)}{2}$,
and~$x=\frac{\alpha}{\beta}\in\mathbb F_{p^2}$.
The Frobenius exchanges~$\alpha$ and $\beta$,
so $x^p=x^{-1}$,~$\varphi(p)\divides p+1$,
and~$x=-\alpha^2$ so
\[
 x^{(p+1)/2}=(-1)^{(p+1)/2}\alpha^{p+1}=(-1)^{(p+3)/2}.
\]
If~$p\equiv1\pmod4$, then the right-hand side is~$1$
and $\frac{p+1}{2}$ is odd, so~$\varphi(p)$ is odd.
If~$p\equiv3\pmod4$, then the value~is~$-1$, so the order of~$x$ is even
and~$\varphi(p)$ is even and~\eqref{oddrank}
follows.
The residue-class consequences now follow by the Chinese remainder theorem.
\end{proof}

\begin{lemma}\label{lemmaMainLucas}
For~$p\in\PP$ with~$p\neq2,3$ we have~$p\in\gooddoldprimes(L)$ if
and only if~$\lambda(p)=\infty$ or~$2\lambda(p)\divides p-1$.
The primes~$2$ and~$3$ are not in~$\gooddoldprimes(L)$.
\end{lemma}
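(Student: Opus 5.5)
The plan is to test the Arias de Reyna form~\eqref{equationDoldForIntegerSequence2} of the Dold congruence directly on the sequence $b=[L]_p$, using the standard divisibility theory of $L$ from Ribenboim's survey~\cite{MR1352481}. If $\lambda(p)=\infty$, then $b=(1,1,1,\dots)$ is trivially Dold, so we assume $\lambda=\lambda(p)<\infty$ and $p\geq 5$. (The case $p=5$ is already covered by Lemma~\ref{lemmaTrivialLucas}(2).) We will use two facts for odd $p$:
\[
p\divides L_n \iff n=\lambda k \text{ with } k \text{ odd},
\qquad
v_p(L_{\lambda k})=c+v_p(k) \text{ for odd } k,
\]
where $c=v_p(L_\lambda)\geq1$. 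The first fact follows from Lemma~\ref{lemmaTrivialLucas}(3), because $p\divides F_m$ iff $\varphi(p)\divides m$ and $\varphi(p)=2\lambda$. The second is the lifting-the-exponent property. Since $\varphi(p)\divides p\pm1$, we also have $p\notdivides\lambda$.

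Next we check the congruence $b_{mq^s}\equiv b_{mq^{s-1}}\pmod{q^s}$ case by case. Write $n=mq^s$.

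\emph{Case $q=p$.} Because $p\notdivides\lambda$, the indices $n$ and $n/p$ are either both odd multiples of $\lambda$ or both not. In the first case the two $p$-parts are $p^{c'+s}$ and $p^{c'+s-1}$ with $c'\geq c\geq1$, so both vanish modulo $p^s$. In the second case both values are $1$. Either way the congruence holds.

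\emph{Case $q\neq p$.} If both indices or neither index is an odd multiple of $\lambda$, then the two $p$-parts coincide, since $v_p(k)$ is unchanged by the factor $q$. The only mixed situations are the following. If $q$ is odd, $n$ is an odd multiple of $\lambda$ and $s=v_q(\lambda)\geq1$. If $q=2$, $n/2$ is an odd multiple of $\lambda$ and $s=v_2(\lambda)+1$. In both situations the congruence reads $p^{c'}\equiv1\pmod{q^s}$, where $c'=c+v_p(k)$ can take any value $\geq c$.

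Sufficiency now follows. If $2\lambda\divides p-1$, then $p\equiv1$ modulo $2^{v_2(\lambda)+1}$ and modulo $q^{v_q(\lambda)}$ for each odd $q$. Hence every power of $p$ is $\equiv 1$ modulo each relevant $q^s$, and all the congruences hold.

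For necessity, apply the condition at the two indices $n=\lambda$ and $n=\lambda p$ (for $q=2$, at $2\lambda$ and $2\lambda p$). These give $p^c\equiv1$ and $p^{c+1}\equiv1$ modulo $q^s$. Since $q\neq p$, $p$ is invertible modulo $q^s$, so dividing one by the other gives $p\equiv1\pmod{q^s}$ for each relevant prime power. Combining these over $q$ gives $p\equiv1\pmod{2\lambda}$.

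The main point needing care is exactly this necessity step. A single instance only gives $p^c\equiv1$, which does not force $p\equiv1$ when $c>1$. The fix is to use two indices whose $p$-adic valuations differ by one. A second point needing care is the exact matching of mixed indices with the exponent $s$, especially the shift $s=v_2(\lambda)+1$ at $q=2$.

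Finally, the small primes are handled by explicit counterexamples.
\begin{itemize}
\item For $p=2$ we have $\lambda=3$. Take $q=3$, $m=2$, $s=1$. Then $[L_6]_2=[18]_2=2$ and $[L_2]_2=[3]_2=1$, and $2\not\equiv1\pmod 3$, so $2\notin\gooddoldprimes(L)$.
\item For $p=3$ we have $\lambda=2$. Take $q=2$, $m=1$, $s=2$. Then $[L_4]_3=[7]_3=1$ and $[L_2]_3=3$, and $1\not\equiv3\pmod 4$, so $3\notin\gooddoldprimes(L)$.
\end{itemize}
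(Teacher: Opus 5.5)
Your proposal is correct and follows essentially the paper's argument. Both use the Arias de Reyna form of the congruence, describe the $p$-divisible Lucas terms as odd multiples of $\lambda(p)$ with lifting the exponent, and prove necessity from two indices whose $p$-adic valuations differ by one (the paper's choice $m=tp^j$), with equivalent explicit counterexamples at $p=2,3$. One small slip: for $q=2$ you omit the mixed case where $n$ itself is an odd multiple of $\lambda$ and $s=v_2(\lambda)\geq 1$, but its requirement $p^{c'}\equiv 1\pmod{2^{v_2(\lambda)}}$ follows from $p\equiv 1\pmod{2^{v_2(\lambda)+1}}$, so your sufficiency argument is unaffected.
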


\begin{proof}
Write~$a_n=[L_n]_p$ for all~$n\ge1$ and~$p\in\mathbb{P}$
and~$r=\lambda(p)$ for~$p\in\mathbb{P}$ throughout,
with the prime~$p$ being clear from context as we go through.

For~$p=2$ a direct calculation gives~$
\least_6([L]_2)=a_1-a_2-a_3+a_6=-2$,
which is not divisible by~$6$.
Similarly for~$p=3$ we have~$\least_4([L]_3)=-a_2+a_4=-3+1=-2$.
which is not divisible by~$4$, so~$2,3\notin\gooddoldprimes(L)$.

Now assume that~$p\in\mathbb{P}\setminus\{2,3\}$.
We know that~$p\divides L_n$ if and only if~$n\in\lambda(p)(2\NN_0+1)$,
indeed we have the following `lifting the exponent' result for
the Lucas sequence:
\begin{equation*}\label{equationPAdicSizeLucasSequences}
\ord_p(L_{\lambda(p)k})
=
\ord_p(L_{\lambda(p)})+\ord_p(k)
\end{equation*}
for~$k\in2\NN_0+1$.
Thus by induction we have
\begin{equation}\label{equationPAdicSizeLucasSequences2}
[L_n]_p
=
\begin{cases}
p^{\ord_p(L_{\lambda(p)}+\ord_p(k))}&\mbox{if }n\in\lambda(p)k\mbox{ with $k$ odd};\\
1&\mbox{if not.}
\end{cases}
\end{equation}
Suppose now that~$p\in\gooddoldprimes(L)$ and
let~$c=\ord_p(a_r)$ so~$c>0$ and~$a_d=1$
for~$d<r$, for~$d\in2r\mathbb{N}$, and for~$d$ not a multiple of~$r$.

We first claim that every odd prime-power factor of~$2r$
divides~$p-1$.
To see this, write~$r=q^st$ with~$q\in\mathbb{P}\setminus\{2\}$
and~$q\notdivides t$ and let~$m=tp^j$ for some~$j\ge0$.
Then~$q\notdivides m$ since~$p\notdivides r$,
so~$mq^s=rp^j$ is an odd multiple of~$r$,
and hence~$a_{mq^s}=p^{c+j}$ by~\eqref{equationPAdicSizeLucasSequences2}.
By~\eqref{equationDoldForIntegerSequence2} we deduce
that~$p^{c_j}\equiv1$ modulo~$q^s$ and therefore in particular
\[
p^{c+1}-p^c=p^c(p-1)\equiv0\pmod{q^s}
\]
which shows that
\begin{equation}\label{equationcleveroldSawian}
q^s\divides p-1
\end{equation}
as claimed.

Now assume that~$r=2^st$ with~$s>0$ and~$t$ odd,
and write~$m=tp^j$ with~$j\ge0$. Since~$p\neq2$
we know that~$m$ is odd and so~$m2^s=rp^j$ is
once again an odd multiple of~$r$.
It follows that~$a_{m2^s}=p^{c+j}$.
On the other hand~$m2^{s+1}$ is an even multiple
of~$r$ and so~$a_{m2^{s+1}}=1$. By~\eqref{equationDoldForIntegerSequence2}
we deduce that~$p^{c+j}\equiv1$ modulo~$2^{s+1}$
and so in particular
\[
p^{c+1}-p^c=p^c(p-1)\equiv0\pmod{2^{s+1}}.
\]
Thus~$2^{s+1}\divides p-1$ which, combined
with~\eqref{equationcleveroldSawian} for every odd~$q$,
shows that~$2r\divides p-1$.

We now turn to the reverse direction.
If~$r$ does not exist (that is, if~$p$ does not divide
any term of~$L$) then~$p\in\gooddoldprimes(L)$
trivially. So suppose that~$r$ exists and~$2r\divides p-1$
and~$q\notdivides m$.
If~$m$ is not an odd multiple of~$r$ then
\[
a_{mp^s}=a_{mp^{s-1}}=1
\]
by~\eqref{equationPAdicSizeLucasSequences2}.
If~$m=rk$ for some odd~$k$ then~$p\notdivides k$ since~$p\notdivides m$,
and so
\[
a_{mp^s}-a_{mp^{s-1}}=p^{c+s}-p^{c+s-1}=p^{c+s-1}(p-1)\equiv0\pmod{p^s},
\]
which is compatible
with~\eqref{equationDoldForIntegerSequence2}.
If~$q\neq p$ and~$r\divides m$ then~$r\divides mq^s$ for any~$s\ge1$
and so
\[
\ord_{p}(mq^s/r)=\ord_p(m/r).
\]
It follows that~$a_{mq^s}\equiv q_{mq^{s-1}}$ modulo~$q^s$,
which again is compatible with~\eqref{equationDoldForIntegerSequence2}.
If~$r\notdivides m$ then the only way in which multiplication
by~$q$ can make~$r$ divide~$mq^s$ is for~$q$ to complete
the~$q$-part of~$r$. However,~$2r\divides p-1$ so~$p\equiv1$
modulo~$q^s$ for those~$s$ where this is possible.
It follows that~$a_{mq^s}\equiv a_{mq^{s-1}}$ modulo~$q^s$,
which completes the proof of~\eqref{equationDoldForIntegerSequence2}
and hence shows that~$p\in\gooddoldprimes(L)$.
\end{proof}

\begin{theorem}\label{theoremLucas}
The Lucas sequence~$L$ is almost Dold at every
prime, and
\[
\failure([L]_p)
=
\begin{cases}
3&\mbox{if }p=2,\\
\lambda(p)&\mbox{if }q\equiv3,7\pmod{20},\\
1&\mbox{in all other cases}.
\end{cases}
\]
\end{theorem}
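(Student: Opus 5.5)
The plan is to reduce the Möbius transform of~$[L]_p$ to the Möbius transform of a simpler auxiliary sequence, show that this auxiliary sequence is itself Dold, and read off~$\failure([L]_p)$ from a few explicit values. As in the proof of Lemma~\ref{lemmaMainLucas}, write~$a_n=[L_n]_p$,~$r=\lambda(p)$ and~$c=\ord_p(L_r)$. If~$r=\infty$ then~$a$ is constant~$1$, which is Dold, and~$\failure=1$. Otherwise, by~\eqref{equationPAdicSizeLucasSequences2}, we write~$a=\mathbf 1+b$, where~$b_{rk}=p^{c+\ord_p(k)}-1$ for odd~$k$ and~$b_n=0$ otherwise. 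Since~$\least_n(\mathbf 1)=0$ for~$n>1$, we have~$\least_n(a)=\least_n(b)$ for~$n\ge2$. The case~$n=1$ is harmless when~$r>1$. When~$r=1$ (that is,~$p=3$ with~$L_1=1$ excluded; in fact~$r\ge2$ always here since~$L_1=1$), it is also harmless.

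The key structural step is the identity
\[
\least_{rm}(b)=\least_m(g),\qquad \least_n(b)=0\ \text{ if } r\notdivides n,
\]
where~$g_k=(p^{c+\ord_p(k)}-1)\cdot\mathbf 1_{k\text{ odd}}$. It holds because the divisors~$d$ of~$rm$ with~$r\divides d$ are exactly~$d=re$ with~$e\divides m$. I then check that~$g$ satisfies~\eqref{equationDoldForIntegerSequence2}. For odd~$q\neq p$ the two terms are equal. For~$q=p$ the difference is~$p^{c+s-1}(p-1)$ (or~$0$), which is divisible by~$p^s$. For~$q=2$ the only nonzero case is~$s=1$, where the difference is~$\pm(p^{c+\ord_p m}-1)$; this is even when~$p$ is odd. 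For~$p=2$ we have~$r=3$, and~$g$ at~$q=2$ is checked directly. Hence~$m\divides\least_m(g)$ for all~$m$, so
\[
\frac{\least_{rm}(a)}{rm}=\frac1r\cdot\frac{\least_m(g)}{m},
\]
and~$\failure([L]_p)$ divides~$r$. In particular~$L$ is almost Dold at every prime.

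It remains to pin down~$\failure$ exactly, and this is the main obstacle. If~$p\neq2,3$ and~$2r\divides p-1$, Lemma~\ref{lemmaMainLucas} already gives~$\failure=1$. By Lemma~\ref{lemmaTrivialLucas} this covers every prime with~$\bigl(\frac5p\bigr)=1$, since then~$\varphi(p)=2r$ divides~$p-1$. It also covers~$p=5$ and the primes~$p\equiv13,17\pmod{20}$, since for these~$r=\infty$. The remaining primes are~$p=2$ (where~$r=3$) and~$p\equiv3,7\pmod{20}$ (which includes~$p=3$, with~$r=2$). For the latter,~$\varphi(p)=2r$ divides~$p+1$, so~$p\equiv-1\pmod r$ and~$p\equiv3\pmod 4$.

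To show that the denominator is exactly~$r$, I will use three values of~$\least_m(g)/m$:
\begin{align*}
m=1&:\quad p^c-1,\\
m=p&:\quad p^{c-1}(p-1),\\
m=2p&:\quad g_{2p}-g_p-g_2+g_1 \;=\; -\tfrac12\,p^{c-1}(p-1).
\end{align*}
The lcm of the resulting denominators is at least~$r/\gcd\bigl(r,\tfrac{p-1}{2}\bigr)$, where~$p\nmid r$ is used. Since~$p\equiv-1\pmod r$, we have~$p-1\equiv-2\pmod r$, so~$\gcd(r,p-1)\divides2$. Since~$p\equiv3\pmod4$, the number~$\frac{p-1}{2}$ is odd. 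Together these give~$\gcd\bigl(r,\frac{p-1}2\bigr)=1$, hence~$\failure=r$.

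For~$p=2$ the same three evaluations, or simply the direct computation~$\least_6=-2$ from Lemma~\ref{lemmaMainLucas}, give a denominator~$3=r$. Combined with~$\failure\divides r$, this yields the stated values. I expect the delicate points to be the bookkeeping at~$q=2$, both in the Dold check for~$g$ and in the choice~$m=2p$ that removes the residual factor~$2$.
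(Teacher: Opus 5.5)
Your argument is sound at every odd prime, but the case $p=2$ fails as written. You define $b$ via \eqref{equationPAdicSizeLucasSequences2}, which holds only for odd $p$. At $p=2$, \emph{every} multiple of $3$ indexes an even Lucas number: $[L_n]_2=4$ for $n\equiv3\pmod 6$ and $[L_n]_2=2$ for $6\divides n$ (Lengyel's formula, quoted in the paper). With your definition, $g=(3,0,3,0,\dots)$, so $\least_2(g)=g_2-g_1=-3$ is odd and the ``direct check at $q=2$'' is false. Your identity would even give $\least_6([L]_2)=\least_2(g)=-3$, which contradicts the value $-2$ you quote. So the lower bound $3\divides\failure([L]_2)$ is fine, but the upper bound $\failure([L]_2)\divides 3$ is not established. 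The fix is easy. The correct auxiliary sequence is $g_k=3$ for odd $k$ and $g_k=1$ for even $k$. Its M\"obius transform is $(3,-2,0,0,\dots)$, so this $g$ is Dold and $\failure([L]_2)\divides 3$. Then $\least_6([L]_2)=\least_2(g)=-2$ gives equality, reproducing the paper's list $\least_1=1$, $\least_3=3$, $\least_6=-2$.

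For odd $p$, your route differs from the paper's and is cleaner. The paper lists every nonzero $\least_n([L]_p)$ and reads off the lcm of denominators. You instead factor $\least_{rm}([L]_p)=\least_m(g)$ and check \eqref{equationDoldForIntegerSequence2} for $g$. This gives $\failure([L]_p)\divides\lambda(p)$ at every odd prime with $\lambda(p)<\infty$ at once, hence the almost Dold property, with no case bookkeeping. A single value, $\least_{2rp}([L]_p)=-p^{c}(p-1)$, then pins down $\failure$. Taking $m=2p$ rather than $m=2$ removes any dependence on $\gcd(2r,p^{c}-1)$.

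Your conclusion $\gcd\bigl(r,\tfrac{p-1}{2}\bigr)=1$ is exactly the right point. Since $r$ is always even for $p\equiv3,7\pmod{20}$, one has $\gcd(r,p-1)=2$. So the expression $\lambda(p)/\gcd(\lambda(p),p-1)$ at the end of the paper's argument would give $r/2$: for example $2$ at $p=7$, whereas $\least_8([L]_7)/8=-\tfrac34$. The lcm of the denominators of the values the paper lists is $2r/\gcd(2r,p-1)=r$, which agrees with you and with the statement.

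One small point: you use \eqref{equationPAdicSizeLucasSequences2} at $p=3$, where the paper only derives it for $p\neq2,3$. It does hold for all odd $p$ by the usual lifting-the-exponent argument, but you should say so.
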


\begin{proof}
Assume that~$p\neq2,3$.
We first prove that~$L$ is Dold at a prime
if and only if~$p\neq2$ and~$p$ is not congruent to~$3$
or~$7$ modulo~$20$, giving the third case.

If~$p\equiv\pm1$ modulo~$5$ then~$2\lambda(p)\divides p-1$,
and we may apply Lemma~\ref{lemmaMainLucas}
to see that~$p\in\gooddoldprimes(L)$.

If~$p\equiv\pm2$ modulo~$5$ then~$\varphi(p)\divides p+1$
and there are two cases to consider.
If~$p\equiv3$ modulo~$4$ then the Fibonacci rank~$\varphi(p)$
is even and hence~$p$ divides a Lucas number
and~$2\lambda(p)=\varphi(p)$.
The Dold condition requires that~$2\lambda(p)=\varphi(p)\divides p-1$
which is impossible as~$\varphi\divides p+1$ and~$\varphi(p)>2$
by inspection. Thus the simultaneous congruences~$p\equiv3$
modulo~$4$ and~$p\equiv\pm2$ modulo~$5$ imply that~$p\in\baddoldprimes(L)$.
By the Chinese remainder theorem these bad primes
are precisely those congruent to~$3$ or to~$7$ modulo~$20$.
If~$p\equiv1$ modulo~$4$ and~$p\equiv\pm2$ modulo~$5$
then Lemma~\ref{lemmaTrivialLucas} shows that~$p
\in\gooddoldprimes(L)$ trivially.

Applying the Chinese remainder theorem and checking
off the possible congruence classes modulo~$20$ then
proves that~$\failure([L]_p)=1$ unless~$p=2$ or~$p\equiv3,7$ modulo~$20$
and proves that~$\failure([L]_p)$ is not~$1$ (and
{\it{a priori}} may not exist) in all the other
cases.

The calculation of the claimed repair factors
for~$p=2$ and for~$p\equiv3,7$ modulo~$20$
involves precise knowledge of all the values
of~$\least_n([L]_p)$.
By Lengyel~\cite[Lemma~2]{MR1337793}
\[
[L_n]_2
=
\begin{cases}
1&\mbox{if }n\equiv1,2\pmod{3},\\
2&\mbox{if }n\equiv0\pmod{6},\\
4&\mbox{if }n\equiv3\pmod{6}
\end{cases}
\]
and so
\[
\least_n([L]_2)
=
\begin{cases}
1&\mbox{if }n=1,\\
3&\mbox{if }n=3,\\
-2&\mbox{if }n=6,\\
0&\mbox{for all other $n$}.
\end{cases}
\]
Thus~$\failure([L]_2)=3$ as claimed.

A direct calculation
using~\eqref{equationPAdicSizeLucasSequences2} shows
that
\begin{equation}\label{equationAA}
\least_{n}([L]_p)
=
\begin{cases}
\displaystyle\sum_{\substack{d\smalldivides m\\d\text{ odd}}}
\mu(\tfrac{m}{d})(p^{e+\ord_p(d)}-1)
&\mbox{if }n=rm,\\
0&\mbox{if }r\notdivides n.
\end{cases}
\end{equation}

If~$\lambda(p)$ is odd then~$[L_n]_p=1$ for all~$n\ge1$
and we are in the good prime case.
So assume that~$p\equiv3,7$ modulo~$20$
and~$\lambda(p)=r$.
We claim that if~$e=\ord_p(L_{r})$ the
only non-zero terms of~$\least_n([L]_p)$ are
\begin{align}
\least_1([L]_p)&=1,\nonumber\\
\least_{r}([L]_p)&=p^e-1\mbox{ if }r>1,\label{was12}\\
\least_{2r}([L]_p)&=1-p^e,\nonumber\\
\least_{rp^j}([L]_p)&=p^{e+j-1}(p-1)\mbox{ for }j\ge1,\nonumber\\
\least_{2rp^j}([L]_p)&=-p^{e+j-1}(p-1)\mbox{ for }j\ge1\label{was15}
\end{align}
by~\eqref{equationAA}.
This gives the remaining statement of the theorem
since it shows that~$p$
lies in~$\gooddoldprimes(L)$
if and only if~$\lambda(p)\divides p-1$ and that
\[
\failure([L]_p)
=
\frac{\lambda(p)}{\gcd(\lambda(p),p-1)}.
\]

If~$m=2^as$ with~$s$ odd then~\eqref{equationAA}
vanishes if~$a\ge2$ and for~$a=1$ is minus
its value at~$s$.
If~$a=0$ and~$r>1$ then~\eqref{equationAA} is~$p^e-1$
when~$s=1$ and for~$s>1$ we have
\begin{equation}\label{equationAB}
\least_{rs}([L]_p)
=
\sum_{d\smalldivides s}\mu(\tfrac{s}{d})a_d.
\end{equation}
The claimed equations~\eqref{was12}--\eqref{was15}
now follow from~\eqref{equationAA} and~\eqref{equationAB}.
\end{proof}

Turning to the Fibonacci numbers~$F=(F_n)=(1,1,2,\dots)$
we have a different starting point: The sequence~$F$ is not a
Dold sequence.
This fundamental distinction between~$L$ and~$F$ can be expressed
in two different languages. In dynamical systems~$L$ has a realization
as counting the periodic points of a simple dynamical
system (the `golden mean shift') while~$F$ does not
count the periodic points of any dynamical system.
In number theory~$L$ is a `trace sequence' because
\[
L_n
=
\trace A^n
\]
for all~$n\ge1$ where~$A=\begin{pmatrix}1&1\\1&0\end{pmatrix}$,
while~$F$ is not a trace sequence.

Indeed~$F$ is not even an almost
Dold sequence: Moss {\it{et al.}}~\cite[Lemma~1]{MR4394356} show that
the denominator of~$\frac{1}{p}\least_p(F)$ is~$p$ whenever~$p\equiv\pm2$ modulo~$5$.
In that work they show that~$(F_{n^k})$ is not almost
realizable if~$k$ is odd, but that~$(F_{5n^k})$ is
realizable if~$k$ is even. This was generalized by
Luca and one of the authors to certain other linear
recurrence sequences~\cite{MR4590321}.

\begin{theorem}\label{theoremFibonacci}
The Fibonacci sequence~$F$ is almost Dold at every prime,
and
\[
\failure([F]_p)
=
\begin{cases}
3&\mbox{if }p=2,\\
5&\mbox{if }p=5,\\
\frac{\varphi(p)}{\gcd(\varphi(p),p-1)}&\mbox{if $p\neq2,5$}.
\end{cases}
\]
\end{theorem}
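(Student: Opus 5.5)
The plan is to follow the structure of the proof of Theorem~\ref{theoremLucas}. First I write down the exact $p$-adic valuation of $F_n$ at each prime. Then I use it to compute every value $\least_n([F]_p)$ in closed form, and finally I read off the least common multiple of the denominators of $\frac1n\least_n([F]_p)$.

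The valuation inputs are standard facts on Fibonacci divisibility (Ribenboim~\cite{MR1352481}, Lengyel~\cite{MR1337793}). I take them as known rather than reprove them.
\begin{itemize}
\item For $p\neq2,5$, with $r=\varphi(p)<\infty$ and $e=\ord_p(F_r)\ge1$, we have $\ord_p(F_n)=e+\ord_p(n/r)$ if $r\divides n$, and $0$ otherwise. This is the lifting-the-exponent analogue of \eqref{equationPAdicSizeLucasSequences2}, now holding for all multiples of $r$, not just odd ones.
\item For $p=5$ we have $\ord_5(F_n)=\ord_5(n)$.
\item For $p=2$ we have $\ord_2(F_n)=0$ if $3\notdivides n$, $\ord_2(F_n)=1$ if $n\equiv3\pmod6$, and $\ord_2(F_n)=\ord_2(n)+2$ if $6\divides n$.
\end{itemize}

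For $p\neq2,5$, write $a_n=[F_n]_p=1+\mathbf 1_{r\mid n}\bigl(p^{e+\ord_p(n/r)}-1\bigr)$. The constant sequence $1$ contributes only $\least_1=1$. Substituting $n=rm$ turns the second piece into the M\"obius transform of $m\mapsto p^{e+\ord_p(m)}-1$. Since this function depends only on the $p$-part of $m$, its transform vanishes unless $m=p^j$. This gives the only possibly non-zero terms:
\begin{itemize}
\item $\least_1=1$;
\item $\least_r=p^e-1$ (adding the $1$ from $\least_1$ if $r=1$, which does not occur since $r\ge3$);
\item $\least_{rp^j}=p^{e+j-1}(p-1)$ for $j\ge1$.
\end{itemize}
Because $r\divides p\pm1$, we have $p\notdivides r$. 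The denominator of $\least_{rp^j}/(rp^j)$ is therefore $r/\gcd(r,p-1)$. The denominator at $n=r$ is $r/\gcd(r,p^e-1)$, which divides $r/\gcd(r,p-1)$. Taking the least common multiple gives $\failure([F]_p)=\varphi(p)/\gcd(\varphi(p),p-1)$. In particular the sequence is almost Dold at $p$.

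For $p=5$, the sequence is $a_n=5^{\ord_5 n}$, so the only non-zero terms are $\least_1=1$ and $\least_{5^j}=4\cdot5^{j-1}$. The denominators are all $5$, so $\failure([F]_5)=5$.

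For $p=2$, write $n=3m$ and set $c(m)=[F_{3m}]_2-1$. Then $c(m)=1$ for $m$ odd and $c(m)=2^{\ord_2(m)+2}-1$ for $m$ even. Again $c$ depends only on the $2$-part of $m$, so the non-zero terms are:
\begin{itemize}
\item $\least_1=1$;
\item $\least_3=1$;
\item $\least_6=6$;
\item $\least_{3\cdot2^k}=2^{k+1}$ for $k\ge2$.
\end{itemize}
The quotients $\frac13$, $1$ and $\frac23$ have denominators dividing $3$, so $\failure([F]_2)=3$.

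The main obstacle is justifying the valuation formulas cleanly. The key point for $p\neq2,5$ is that $p\notdivides\varphi(p)$, so the lifting-the-exponent step applies uniformly to every multiple of $r$. The other delicate point is the $p=2$ case, where $\ord_2(F_{6m})$ jumps by $2$ rather than $1$. Everything after that is a short M\"obius computation, and I would check it against small values such as $F_6=8$, $F_{12}=144$ and $F_{25}$.
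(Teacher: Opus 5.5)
Your proposal is correct and follows essentially the same route as the paper: take the known $p$-adic valuations of $F_n$ (the paper cites Sanna), compute $\least_n([F]_p)$ explicitly for $p\neq2,5$, $p=5$ and $p=2$, and read off the least common multiple of the denominators. Your write-up also supplies details the paper leaves as a ``somewhat tedious calculation'': that a function depending only on the $p$-part of $m$ has M\"obius transform supported on powers of $p$, the term $\least_1=1$, and the check that the denominator at $n=\varphi(p)$ divides $\varphi(p)/\gcd(\varphi(p),p-1)$.
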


\begin{proof}
The last claim in particular shows that~$p\in\gooddoldprimes(F)$ if and only
if~$p\equiv\pm1$ modulo~$5$, and in fact in all these cases~$p$
is known to be in~$\goodrealizableprimes$
by a construction due to Everest {\it{et al.}}~\cite[Th.~2.6]{MR1938222}.
The proof once again uses
well-known congruences and relations for the Fibonacci numbers,
for which we refer to Ribenboim's survey~\cite{MR1352481}.
For~$p\neq2,5$ we have~$\varphi(p)\divides p-(\tfrac{5}{p})$
and~$F_p\equiv(\frac{5}{p})$ modulo~$p$, so
in particular~$p\notdivides\varphi(p)$.
By Sanna~\cite{MR3512829} we have
\begin{equation}\label{fibonacci2part}
\ord_p(F_n)
=
\begin{cases}
\ord_5(n)&\mbox{if }p=5,\\
0&\mbox{if $p=2$ and $3\notdivides n$},\\
1&\mbox{if $p=2$ and $n\equiv3\pmod{6}$},\\
\ord_2(n)+2&\mbox{if $p=2$ and $6\divides n$}
\end{cases}
\end{equation}
for the anomalous primes~$2,5$ and
\begin{align*}
\ord_p(F_n)
=
\begin{cases}
e_p+\ord_p(n/\varphi(p))&\mbox{if }\varphi(p)\divides n,\\
0&\mbox{if }\varphi(p)\notdivides n
\end{cases}
\end{align*}
for all other primes.
Once again the argument uses these explicit formulas to
calculate that
\[
\least_{\varphi(p)}([F]_p)=p^{e_p}-1
\]
and
\[
\least_{p^k\varphi(p)}([F]_p)=p^{e_p+k-1}(p-1)
\]
for all~$k\ge1$, with all other values vanishing. It follows that
\[
\failure([F]_p)
=
\frac{\varphi(p)}{\gcd(\varphi(p),p-1)}.
\]
This (after a somewhat tedious calculation)
completes the theorem for~$p\neq2,5$.

For~$p=5$ a similar calculation using~\eqref{fibonacci2part}
shows that
\begin{align*}
\least_1([F]_5)&=1,\\
\least_{5^k}([F]_5)&=4\cdot 5^{k-1}\mbox{ for }k\ge1
\end{align*}
with all other values vanishing
and so~$\failure([F]_5)=5$.
Finally, for~$p=2$ a calculation using~\eqref{fibonacci2part}
again shows that
\begin{align*}
\least_1([F]_2)&=1,\\
\least_3([F]_2)&=1,\\
\least_6([F]_2)&=6,\\
\least_{3\cdot2^k}([F]_2)&=2^{k+1}\mbox{ for }k\ge2\\
\end{align*}
with all other values vanishing, and so~$\failure([F]_2)=3$.
\end{proof}

\section{Relative Densities of the Good Primes}

The relative natural density of a
set~$\mathcal{S}\subseteq\PP$ is defined
to be
\begin{equation*}\label{definitionRelativePrimeDensity}
\delta_{\PP}(\mathcal{S})
=
\lim_{x\to\infty}
\frac{\bigl\vert\{p\le x:p\in\mathcal{S}\}\bigr\vert}{\bigl\vert\{p\le x:p\in\PP\}\bigr\vert}
\end{equation*}
when this limit exists. Put~$
\mathcal{P}_L
=
\{p\in\PP:p\divides L_n\text{ for some }n\ge1\}$.
We first recall the set of good primes for each sequence,
since the Dold property and realizability do not coincide locally for Lucas.

\begin{proposition}\label{propositionGoodPrimeSets}
For the Lucas numbers,
\begin{align}
\gooddoldprimes(L)
&=
\PP\setminus\bigl\{\{2\}\cup
\{p\in\PP:p\equiv3\text{ or }7\pmod{20}\}\bigr\},
\label{equationLucasDoldSet}\\
\gooddoldprimes^{\text{almost}}(L)
&=\PP,
\nonumber
\intertext{and}
\goodrealizableprimes(L)
&=
\goodrealizableprimes^{\text{almost}}(L)
=
\PP\setminus\mathcal{P}_L
=
\{p\in\PP:\lambda(p)=\infty\}.
\label{equationLucasRealizableSets}
\end{align}
For the Fibonacci numbers,
\begin{align}
\gooddoldprimes(F)
&=
\goodrealizableprimes(F)
=
\{p\in\PP:p\equiv1\text{ or }4\pmod5\},
\label{equationFibonacciGoodSets}
\intertext{and}
\gooddoldprimes^{\text{almost}}(F)
&=
\goodrealizableprimes^{\text{almost}}(F)
=\PP.
\label{equationFibonacciAlmostSets}
\end{align}
\end{proposition}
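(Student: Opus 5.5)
We will treat the Proposition largely as a collection of the earlier results, making explicit the one new ingredient: the sign analysis needed for realizability. For the Lucas Dold set \eqref{equationLucasDoldSet} we simply quote the first part of the proof of Theorem~\ref{theoremLucas}. There, Lemma~\ref{lemmaMainLucas} together with Lemma~\ref{lemmaTrivialLucas} showed that $p\in\gooddoldprimes(L)$ exactly when $p\neq2$ and $p\not\equiv3,7\pmod{20}$. The prime $3\equiv3\pmod{20}$ already lies in the excluded class, so it is consistent with Lemma~\ref{lemmaMainLucas}. The statement $\gooddoldprimes^{\text{almost}}(L)=\PP$ is the assertion in Theorem~\ref{theoremLucas} that $\failure([L]_p)$ is finite at every prime. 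In the same way, $\gooddoldprimes^{\text{almost}}(F)=\PP$ and $\gooddoldprimes(F)=\{p:p\equiv\pm1\pmod5\}$ follow from Theorem~\ref{theoremFibonacci}. Indeed, $\failure([F]_p)=1$ if and only if $\varphi(p)\divides p-1$. For $p\neq2,5$ this holds when $\bigl(\frac5p\bigr)=1$, because $\varphi(p)\divides p-1$. When $\bigl(\frac5p\bigr)=-1$ it fails, because then $\varphi(p)\divides p+1$ and $\varphi(p)>2$ (since $F_1=F_2=1$), so $\varphi(p)\nmid p-1$. The primes $2$ and $5$ have failure $3$ and $5$ respectively.

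Next we compare signs. For Fibonacci, we read off the explicit values of $\least_n([F]_p)$ computed in the proof of Theorem~\ref{theoremFibonacci}. For $p\neq2,5$ these are $\least_1=1$ (when $\varphi(p)>1$), $\least_{\varphi(p)}=p^{e_p}-1\ge0$ and $\least_{p^k\varphi(p)}=p^{e_p+k-1}(p-1)>0$, with all other values zero. For $p=2,5$ the listed values are likewise non-negative. Hence every almost Dold prime is almost realizable, which gives \eqref{equationFibonacciAlmostSets}, and every Dold prime is realizable, which gives \eqref{equationFibonacciGoodSets}. Everest \emph{et al.}\ can be cited as a cross-check.

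For Lucas the sign analysis is the substantive point, since realizable and Dold now differ. If $\lambda(p)=\infty$ then $[L]_p=(1,1,\dots)$, which is realizable. This gives the inclusion $\PP\setminus\mathcal{P}_L\subseteq\goodrealizableprimes(L)$.

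For the reverse inclusion, suppose $r=\lambda(p)<\infty$. Then $[L_r]_p=p^e>1$ but $[L_{2r}]_p=1$, by \eqref{equationPAdicSizeLucasSequences2} for odd $p$, or by Lengyel's formula for $p=2$. We compute
\[
\least_{2r}([L]_p)=\sum_{d\smalldivides 2r}\mu\bigl(\tfrac{2r}{d}\bigr)[L_d]_p ,
\]
and we aim to show it equals $1-p^e<0$, as in \eqref{was15}. The argument is as follows. A divisor $d$ of $2r$ has $[L_d]_p\neq1$ only if $d$ is an odd multiple of $r$, that is, only if $d=r$ (and $r\divides d\divides 2r$ leaves only $d=r$ or $d=2r$). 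The terms with $[L_d]_p=1$ contribute $\sum_{d\smalldivides 2r}\mu(2r/d)=0$ after correcting the single term $d=r$. The result is
\[
\least_{2r}([L]_p)=\mu(2)(p^e-1)=1-p^e<0 .
\]
For $p=2$ this is the value $\least_6([L]_2)=-2$ found earlier. A negative value of $\least_{2r}$ rules out both realizability and almost realizability, since both require $\least_n\ge0$ for all $n$. Hence $\goodrealizableprimes^{\text{almost}}(L)\subseteq\PP\setminus\mathcal{P}_L$. Combined with the trivial inclusions, this gives the chain of equalities in \eqref{equationLucasRealizableSets}, and the final equality is the definition of $\lambda$.

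The main obstacle is this last step, namely making sure the negative term appears for every $p\in\mathcal{P}_L$, including $p=2$ and the Dold primes $p\equiv\pm1\pmod5$ with $\lambda(p)<\infty$. The Möbius computation above covers all these cases uniformly, since it uses only that $[L_r]_p>1$ and $[L_{2r}]_p=1$.
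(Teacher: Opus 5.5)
Your route is the same as the paper's. You read the Dold and almost Dold sets off Theorems~\ref{theoremLucas} and~\ref{theoremFibonacci}. You get the Fibonacci sets from $\varphi(p)\divides p-\bigl(\frac5p\bigr)$ and $\varphi(p)\ge3$; you should add the reciprocity step $\bigl(\frac5p\bigr)=\bigl(\frac p5\bigr)$ to land on $p\equiv\pm1\pmod5$. Realizability for $F$ follows from non-negativity of the displayed transforms. For $L$, the obstruction is the negative value of $\least_{2r}([L]_p)$.

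There is one genuine error, at $p=2$. Your M\"obius computation rests on $[L_{2r}]_p=1$, and you say this holds for $p=2$ by Lengyel's formula. It does not. Here $r=\lambda(2)=3$ and $L_6=18$, so $[L_6]_2=2$, exactly as Lengyel's formula gives ($[L_n]_2=2$ whenever $6\divides n$). So the divisor $d=2r$ also contributes a non-trivial term. The true value is $\least_6([L]_2)=a_6-a_3-a_2+a_1=2-4-1+1=-2$, not $1-2^{e}=-3$ with $e=\ord_2(L_3)=2$. Your sentence identifying these two numbers is therefore inconsistent, and the closing claim that the computation covers all cases ``uniformly'' is false. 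The fact that $p\divides L_n$ only for odd multiples $n$ of $\lambda(p)$ holds only for odd $p$; for $p=2$ every multiple of $3$ works. The repair is what the paper does: run the uniform computation only for odd $p$, and handle $p=2$ by the direct value $\least_6([L]_2)=-2<0$, which you already quote. With that change your argument is complete.
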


\begin{proof}
The assertions about Dold and almost Dold primes
for the Lucas sequence follow from
Theorem~\ref{theoremLucas}. It remains to identify the realizable sets.

If~$\lambda(p)=\infty$, then~$[L]_p=(1,1,\ldots)$, which is realizable.
Conversely, let~$p$ be odd and put~$r=\lambda(p)<\infty$
and~$e=\ord_p(L_r)$. The indices of Lucas numbers divisible by~$p$ are
the odd multiples of~$r$. Thus, among the divisors of~$2r$, only~$r$
indexes a term divisible by~$p$. Since~$2r>1$, it follows that
\[
\least_{2r}([L]_p)
=
\sum_{d\smalldivides 2r}\mu(2r/d)+\mu(2)(p^e-1)
=1-p^e<0.
\]
For~$p=2$, the proof of Theorem~\ref{theoremLucas}
shows that~$\least_6([L]_2)=-2$. These negative values exclude both
realizability and almost realizability.
It follows that~$[L]_p$ is realizable if
and only if it is almost realizable, and this occurs exactly
when~$p\notin\mathcal{P}_L$.

For Fibonacci numbers, Theorem~\ref{theoremFibonacci} gives
\[
\failure([F]_p)
=
\frac{\varphi(p)}{\gcd(\varphi(p),p-1)}
\qquad(p\neq2,5).
\]
If~$\bigl(\frac5p\bigr)=1$,
then~$\varphi(p)\divides p-1$ and this factor is~$1$.
If~$\bigl(\frac5p\bigr)=-1$ and the factor were~$1$,
then~$\varphi(p)$ would divide both~$p-1$ and~$p+1$, and hence would
divide~$2$, contradicting~$\varphi(p)\ge3$. Together with the
exceptional factors at~$2$ and~$5$, the
identity~$\bigl(\frac5p\bigr)=\bigl(\frac p5\bigr)$ from quadratic reciprocity
gives~\eqref{equationFibonacciGoodSets}.
Finally,~$\least_1([F]_p)=1$ for every~$p$, and every other nonzero value
displayed in the proof of Theorem~\ref{theoremFibonacci} is
non-negative. Thus the Dold property and realizability coincide, while every
local Fibonacci sequence is almost realizable,
proving~\eqref{equationFibonacciAlmostSets}.
\end{proof}

\begin{corollary}[Relative prime densities]\label{corollaryDensities}
All eight relative natural densities below exist and are given by
\begin{equation}\label{equationDensityTable}
\begin{array}{c@{\qquad}cccc}
\toprule
&\gooddoldprimes
&\goodrealizableprimes
&\gooddoldprimes^{\text{almost}}
&\goodrealizableprimes^{\text{almost}}\\
\midrule
L&\frac34&\frac13&1&\frac13\\[4pt]
F&\frac12&\frac12&1&1\\
\bottomrule
\end{array}
\end{equation}
Moreover, the set of primes that are good
for the Dold property and primes that are locally
realizable have densities
\begin{align*}
\delta_{\PP}\!\left(
\gooddoldprimes(L)\setminus\goodrealizableprimes(L)\right)
&=\textstyle\frac5{12}
\intertext{and}
\delta_{\PP}\!\left(
\gooddoldprimes^{\text{almost}}(L)
\setminus\goodrealizableprimes^{\text{almost}}(L)\right)
&=\textstyle\frac23.
\end{align*}
The corresponding Fibonacci gaps are empty.
\end{corollary}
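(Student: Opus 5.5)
The plan is to read off each entry of \eqref{equationDensityTable} from the explicit descriptions of the good prime sets in Proposition~\ref{propositionGoodPrimeSets}. For the entries described by congruence conditions I will use Dirichlet's theorem on primes in arithmetic progressions. For the one entry that is not a congruence condition I will quote a known Chebotarev-type result.

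\emph{Congruence entries.} By \eqref{equationLucasDoldSet}, $\gooddoldprimes(L)$ differs from the set of primes in four residue classes modulo~$20$ by a finite set. The reduced residues modulo~$20$ are $1,3,7,9,11,13,17,19$, eight classes in all. Removing $3$ and $7$ leaves $6$ of the $8$ classes, so the density is $\frac68=\frac34$. Similarly, \eqref{equationFibonacciGoodSets} gives the primes in two of the four reduced classes modulo~$5$, so both Fibonacci entries in the first two columns equal $\frac12$. The almost Dold sets, and $\goodrealizableprimes^{\text{almost}}(F)$, are all of $\PP$, so those densities are $1$.

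\emph{The set $\PP\setminus\mathcal{P}_L$.} This is the main obstacle. By \eqref{equationLucasRealizableSets} it is the set of primes dividing no Lucas number, and by Lemma~\ref{lemmaTrivialLucas}(3) this is the same as $\varphi(p)$ being odd (together with $p=5$). It is not a union of residue classes: Lemma~\ref{lemmaTrivialLucas}(4) only settles the primes with $\bigl(\frac5p\bigr)=-1$. Those primes have density $\frac12$, and exactly half of them, namely $p\equiv13,17\pmod{20}$, lie in the set, contributing $\frac14$.

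For $p\equiv\pm1\pmod5$ the parity of the order of $\alpha/\beta=-\alpha^2$ in $\FF_p^\times$ depends on the $2$-power structure. I would invoke Lagarias' theorem that the primes dividing some Lucas number have density $\frac23$, which gives $\delta_{\PP}(\PP\setminus\mathcal{P}_L)=\frac13$.

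If a self-contained sketch is wanted instead, I would proceed as follows. Stratify the split primes by $k=\ord_2(p-1)$ and by $j$, the largest exponent with $-\alpha^2$ a $2^j$-th power modulo~$p$. Then compute the Chebotarev densities in the fields $\mathbb{Q}(\zeta_{2^k},(-\alpha^2)^{1/2^j})$ and sum the resulting geometric series. The expected contribution of the split primes is $\frac13-\frac14=\frac1{12}$. This also gives the remaining entry $\goodrealizableprimes^{\text{almost}}(L)=\goodrealizableprimes(L)$, with density $\frac13$.

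\emph{Gaps.} By Lemma~\ref{lemmaMainLucas}, every prime with $\lambda(p)=\infty$ is Dold. Hence $\goodrealizableprimes(L)\subseteq\gooddoldprimes(L)$, and the first gap has density $\frac34-\frac13=\frac5{12}$. Since $\gooddoldprimes^{\text{almost}}(L)=\PP$, the second gap is the complement of $\goodrealizableprimes^{\text{almost}}(L)$, with density $1-\frac13=\frac23$. For $F$, \eqref{equationFibonacciGoodSets} and \eqref{equationFibonacciAlmostSets} show that the paired sets coincide, so both gaps are empty.
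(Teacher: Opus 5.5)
Your proposal is correct and follows essentially the paper's route. Dirichlet's theorem in density form handles the congruence-defined entries, Lagarias's theorem $\delta_{\PP}(\mathcal{P}_L)=\frac23$ gives the Lucas realizability sets, the inclusion $\goodrealizableprimes(L)\subseteq\gooddoldprimes(L)$ gives the first gap, and $\gooddoldprimes^{\text{almost}}(L)=\PP$ gives the second; your Chebotarev sketch, with its correct split-prime contribution $\frac1{12}$, is an optional extra. One small slip: $\gooddoldprimes(L)$ agrees up to a finite set with the primes in six, not four, reduced residue classes modulo~$20$, as your own count $\frac68$ shows.
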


\begin{proof}
By the prime number theorem for arithmetic progressions,
each of the eight reduced residue classes
modulo~$20$ has relative prime density~$\frac18$. Apart from the single
prime~$2$, equation~\eqref{equationLucasDoldSet} excludes exactly the
classes~$3$ and~$7$. Hence
\[
\delta_{\PP}(\gooddoldprimes(L))=\textstyle\frac34,
\qquad
\delta_{\PP}(\gooddoldprimes^{\text{almost}}(L))=1.
\]

Lagarias proved that~$\delta_{\PP}(\mathcal{P}_L)=\frac23$
in~\cite[Th.~B]{Lagarias1985}. His indexing includes~$L_0=2$, whereas
ours starts at~$L_1$; this does not change the prime-divisor set
because~$2\divides L_3$
(an erratum~\cite{Lagarias1994} concerns a different recurrence and leaves
Theorem~B unchanged). Thus~\eqref{equationLucasRealizableSets}
gives density~$\frac13$ for both Lucas
realizability sets.

For Fibonacci numbers, the classes~$1$ and~$4$ modulo~$5$ each have
relative prime density~$\frac14$.
Equations~\eqref{equationFibonacciGoodSets}--\eqref{equationFibonacciAlmostSets}
give the four entries in the Fibonacci row of~\eqref{equationDensityTable}.

Finally, realizability implies the Dold congruences, so
$\goodrealizableprimes(L)\subseteq\gooddoldprimes(L)$. The first gap
therefore has density~$\frac34-\frac13=\frac{5}{12}$.
The second gap is exactly~$\mathcal{P}_L$, and hence has density~$\frac23$.
\end{proof}

\section{Remarks and open problems}

The distinction between~$\gooddoldprimes$ and
$\goodrealizableprimes$ is essential for the Lucas sequence:
Indeed the negative transform at~$2\lambda(p)$ produces the positive-density
gaps in Corollary~\ref{corollaryDensities}. By contrast, all the local
Fibonacci transforms are non-negative, so the Dold and realizability
sets coincide in both the exact and almost settings.

We have focused only on the Fibonacci sequence and the classical
Lucas sequence because these two cases are good representatives of
the phenomena of interest and have convenient
known arithmetic properties. The Lucas sequence is a trace sequence and hence
intimately linked to dynamical systems in a natural way; the
Fibonacci sequence is not a trace
sequence but nonetheless has tractable arithmetic properties for different
reasons. It is natural to ask how far the congruence and sign
arguments presented here extend to general companion Lucas sequences~$V(P,Q)$ and
Lucas sequences~$U(P,Q)$ in the standard notation of
Ribenboim's survey~\cite{MR1352481}.

Another natural problem from the results here is to try and
characterize these sets of primes for linear recurrence sequences
in general,
starting with the intriguing observation that the classical
tribonacci sequence~$(1,1,2,4,\dots)$
and~$4$-bonacci sequence~$(1,1,1,2,5,\dots)$ seem
(computationally) to have~$\gooddoldprimes=\varnothing$.

The notion of~$\gooddoldprimes(a)$ and~$\goodrealizableprimes(a)$
makes sense for any integer
sequence~$a$. The implicit problem of characterizing these
sets for a given sequence is already raised in~\cite{MR5076929}
with sample empirical calculations for some graph-counting
and related sequences.
For Lehmer--Pierce sequences (that is, those with~$n$th
term of the form
\[
a_n=\prod_{j=1}^{d}\vert\lambda_j^n-1\vert
\]
where~$\lambda_1,\dots,\lambda_j$
are the eigenvalues of an element of~${\rm{SL}}_d(\ZZ)$ with no
unit root as an eigenvalue) the local algebraic
realizability results from~\cite{MR5076929}
show that~$\goodrealizableprimes(a)=\PP$.
More generally, the same holds for the sequence of
periodic point counts from any of the so-called $S$-integer
dynamical systems studied by Everest {\it{et al.}}~\cite{MR1461206,MR2339472}.

\section*{Acknowledgements}
We dedicate this paper to the memory of our
inspirational and generous friend Florian Luca
who took such an interest in questions of this
sort.

This work was supported by the KKU Outbound
Visiting Scholar programme of Khon Kaen University.
The fourth named author would like to thank
Jakub Byszewski for some useful comments on the
empirical results of~\cite{MR5076929}.

Many of the initial numerical experiments that
motivated this work were run using PARI/GP~\cite{PARI2},
and many of the algebraic calculations and
later numerical checks were assisted by
ChatGPT~\cite{ChatGPT}. None of the writing was
assisted by AI.

\section*{Disclosure statement}
No conflict of interest has been reported by the authors.



\noindent MSC2020: 11B39, 37P35


\end{document}